\documentclass[10pt, a4paper]{article}

\usepackage{color,url,latexsym,amssymb,amsthm,amsmath,amsxtra,amsfonts}
\usepackage{graphicx}
\usepackage{amsfonts}
\usepackage{amsbsy}
\usepackage{amssymb}
\usepackage{amsmath}
\usepackage{amsthm}
\theoremstyle{plain}
\newtheorem{theorem}{Theorem}[section]
\newtheorem{lemma}{Lemma}[section]
\newtheorem{corollary}{Corollary}[section]
\newtheorem{proposition}{Proposition}[section]
\theoremstyle{definition}

\newtheorem{remark}{Remark}[section]

\topmargin 0.2cm \oddsidemargin 0.1cm \evensidemargin 0.1cm
\textwidth 14.9cm \textheight 20cm

\begin{document}

\title{Vertical Liouville foliations on the big-tangent manifold of a Finsler space}
\author{Cristian Ida and Paul Popescu}
\date{}
\maketitle

\begin{abstract}
The present paper unifies some aspects concerning the vertical Liouville distributions on the tangent (cotangent) bundle of a Finsler (Cartan) space in the context of generalized geometry. More exactly, we consider the big-tangent manifold $\mathcal{T}M$ associated to a Finsler space $(M,F)$ and of its $\mathcal{L}$-dual which is a Cartan space $(M,K)$ and we define three Liouville distributions on $\mathcal{T}M$ which are integrable. We also  find geometric properties of both leaves of Liouville distribution and the vertical distribution in our context.
\end{abstract}

\medskip

\begin{flushleft}
\strut \textbf{2010 Mathematics Subject Classification}: 53B40, 53C12, 53C60.

\textbf{Key Words}: generalized geometry, big-tangent manifold, Liouville field, Finsler space, foliation.
\end{flushleft}

\section{Introduction and preliminary notions}
\setcounter{equation}{0}

\subsection{Introduction}
The vertical Liouville distribution on the tangent bundle of a (pseudo) Finsler space was defined for the first time in \cite{B-F1} where some aspects of the geometry of the vertical bundle are derived via  vertical Liouville distribution. A similar study on the cotangent bundle of a Cartan space can be found in \cite{I-M}. Also, other signifiant studies concerning the interrelations between natural foliations defined by Liouville fields on the tangent bundle of a Finsler space and the geometry of the Finsler space itself, as well as similar problems on Cartan spaces are intensively studied in \cite{Bej} and \cite{A-R1}, respectively. See also \cite{I-M, M-I, P-T-Z, P-N}.

As it is well known, in the \textit{generalized geometry} intitiated in \cite{hi}, the tangent bundle $TM$ of a smooth $n$-dimensional manifold $M$ is replaced by the \textit{big-tangent bundle} (or Pontryagin bundle) $TM\oplus T^*M$. On its total space the velocities and momenta are considered as independent variables. This idea was proposed and developed in \cite{S1, S2} and later was used in the study of Hamiltonian-Jacoby theory for singular Lagrangian systems \cite{L-D-V}. The geometry of the total space of the big-tangent bundle, called \textit{big-tangent manifold}, is intensively studied in \cite{Va13} and some its applications to mechanical systems can be found in \cite{G}.

Using the framework of the geometry on the big-tangent manifold, our aim in this paper is to extend some results concerning the vertical Liouville foliation in the context of generalized geometry. In this sense, we consider the big-tangent manifold $\mathcal{T}M$ associated to a Finsler space $(M,F)$ and of its $\mathcal{L}$-dual which is a Cartan space $(M,K)$.   As usual, we reconsider the vertical Liouville distributions $V_{\mathcal{E}_1}$ and $V_{\mathcal{E}_2}$ from the case of vertical tangent (cotangent) bundle of a Finsler (Cartan) space, see \cite{B-F1, I-M}, for the case of vertical subbundles $V_1$ and $V_2$, respectively, with respect to Liouville vector fields $\mathcal{E}_1$ and $\mathcal{E}_2$. Next we define the Liouville distribution $V_{\mathcal{E}}$ with respect to the  Liouville vector field $\mathcal{E}=\mathcal{E}_1+\mathcal{E}_2$, we prove that it is integrable (Theorem \ref{integrabil}) and we study some of its properties (Theorems \ref{t2} and \ref{t3}). Also, some links between the vertical Liouville foliations $V_{\mathcal{E}_1}$, $V_{\mathcal{E}_2}$ and $V_{\mathcal{E}}$, respectively, are established.

\subsection{Preliminaries and notations}

Let $M$ be a $n$-dimensional smooth manifold, and we consider $\pi:TM\rightarrow M$ its tangent bundle, $\pi^*:T^*M\rightarrow M$ its cotangent bundle and $\tau\equiv \pi\oplus\pi^*:TM\oplus T^*M\rightarrow M$ its big-tangent bundle defined as Whitney sum of the tangent and cotangent bundles of $M$. The total space of the big-tangent bundle, called \textit{big-tangent manifold}, is a $3n$-dimensional smooth manifold denoted here by $\mathcal{T}M$. Let us briefly recall some elementary notions about the big-tangent manifold $\mathcal{T}M$. For a detalied discussion about its geometry we refer \cite{Va13}. 

Let $(U,(x^{i}))$ be a local chart on $M$. If $\{\frac{\partial}{\partial x^{i}}|_x\}$, $x\in U$ is a local frame of sections in the tangent bundle over $U$ and $\{dx^{i}|_x\}$, $x\in U$ is a local frame of sections in the cotangent bundle over $U$, then by definition of the Whitney sum, $\{\frac{\partial}{\partial x^{i}}|_x,dx^{i}|_x\}$, $x\in U$ is a local frame of sections in the big-tangent bundle $TM\oplus T^*M$ over $U$. Every section $(y,p)$ of $\tau$ over $U$ takes the form $(y,p)=y^{i}\frac{\partial}{\partial x^{i}}+p_i dx^{i}$ and the local coordinates on $\tau^{-1}(U)$ will be defined as the triples $(x^{i},y^{i},p_i)$,  where $i=1,\ldots,n=\dim M$, $(x^{i})$ are local coordinates on $M$, $(y^{i})$ are vector coordinates and $(p_i)$ are covector coordinates. 

The change rules of these coordinates are:
\begin{equation}
\label{I1}
\widetilde{x}^{i}=\widetilde{x}^{i}(x^j)\,,\,\widetilde{y}^{i}=\frac{\partial\widetilde{x}^{i}}{\partial x^j}y^j\,,\,\widetilde{p}_i=\frac{\partial x^j}{\partial\widetilde{x}^{i}}p_j
\end{equation} 
and the local expressions of a vector field $X$ and of a $1$-form $\varphi$ on $\mathcal{T}M$ are
\begin{equation}
X=\xi^{i}\frac{\partial}{\partial x^{i}}+\eta^{i}\frac{\partial}{\partial y^{i}}+\zeta_i\frac{\partial}{\partial p_i}\,\,\,{\rm and}\,\,\,\varphi=\alpha_idx^{i}+\beta_idy^{i}+\gamma^{i}dp_i.
\label{I2}
\end{equation}
For the big-tangent manifold $\mathcal{T}M$ we have the following projections
\begin{displaymath}
\tau:\mathcal{T}M\rightarrow M\,,\,\tau_1:\mathcal{T}M\rightarrow TM\,,\,\tau_2:\mathcal{T}M\rightarrow T^*M
\end{displaymath}
on $M$ and on the total spaces of tangent and cotangent bundle, respectively. 

As usual, we denote by $V=V(\mathcal{T}M)$ the vertical bundle on the big-tangent manifold $\mathcal{T}M$ and it has the decomposition
\begin{equation}
\label{I4}
V=V_1\oplus V_2,
\end{equation}
where $V_1=\tau_1^{-1}(V(TM))$, $V_2=\tau_2^{-1}(V(T^*M))$ and have the local frames $\{\frac{\partial}{\partial y^{i}}\}$, $\{\frac{\partial}{\partial p_i}\}$, respectively. The subbundles $V_1$, $V_2$ are the vertical foliations of $\mathcal{T}M$ by fibers of $\tau_1, \tau_2$, respectively, and $\mathcal{T}M$ has a multi-foliate structure \cite{Va70}. The \textit{Liouville vector fields} (or Euler vector fields) are given by
\begin{equation}
\label{I5}
\mathcal{E}_1=y^{i}\frac{\partial}{\partial y^{i}}\in\Gamma(V_1)\,,\,\mathcal{E}_2=p_i\frac{\partial}{\partial p_i}\in\Gamma(V_2)\,,\,\mathcal{E}=\mathcal{E}_1+\mathcal{E}_2\in\Gamma(V).
\end{equation}

In the following we consider that manifold $M$ is endowed with a Finsler structure $F$, and we present a metric structure on $V$ induced by $F$. According to \cite{B-C-S, B-F, M-A}, a function $F:TM\rightarrow[0,\infty)$ which satisfies the following conditions:
\begin{enumerate}
\item[i)] $F$ is $C^\infty$ on $TM^0=TM-\{{\rm zero\,\,section}\}$;
\item[ii)] $F(x,\lambda y)=\lambda F(x,y)$ for all $\lambda\in\mathbb{R}_+$;
\item[iii)] the $n\times n$ matrix $(g_{ij})$, where $g_{ij}=\frac{1}{2}\frac{\partial^2F^2}{\partial y^{i}\partial y^j}$, is positive definite at all points of $TM^0$,
\end{enumerate}
is called a \textit{Finsler structure} on $M$ and the pair $(M,F)$ is called a \textit{Finsler space}. We notice that in fact $F(x,y)>0$, whenever $y\neq0$.

There are some useful facts which follow from the above  homogeneity condition ii) of the fundamental function of the Finsler space $(M,F)$. By the Euler theorem on positively homogeneous functions we have, see \cite{B-C-S, B-F, M-A}:
\begin{equation}
y_i=g_{ij}y^j\,,\,y^{i}=g^{ij}y_j\,,\,F^2=g_{ij}y^{i}y^j=y_iy^i\,,\,C_{ijk}y^k=C_{ikj}y^k=C_{kij}y^k=0,
\label{I6}
\end{equation}
where $(g^{ij})$ is the inverse matrix of $(g_{ji})$ and we have put $y_i=\frac{1}{2}\frac{\partial F^2}{\partial y^i}\,,\,C_{ijk}=\frac{1}{4}\frac{\partial^3F^2}{\partial y^i\partial y^j\partial y^k}$.

Also, for a Finsler structure $F$ on $TM^0$ there is Cartan structure $K=F^*$ on $T^*M^0:=T^*M-\{{\rm zero\,\,section}\}$ obtained by Legendre transformation of $F$ (the $\mathcal{L}$-duality proces, see \cite{M1, M2, M-H-S}), that is a function $K:T^*M\rightarrow[0,\infty)$ which has the following properties:
\begin{enumerate}
\item[i)] $K$ is $C^{\infty}$ on $T^*M^0$;
\item[ii)] $K(x,\lambda p)=\lambda K(x,p)$ for all $\lambda>0$;
\item[iii)] the $n\times n$ matrix $(g^{*ij})$, where $g^{*ij}=\frac{1}{2}\frac{\partial^2 K^2}{\partial p_i\partial p_j}$, is positive definite at all points of $T^*M_0$. 
\end{enumerate}
Also $K(x,p)>0$, whenever $p\neq0$. The properties of $K$ imply that
\begin{equation}
p^i=g^{*ij}p_j\,,\,p_i=g^*_{ij}p^j\,,\,K^2=g^{*ij}p_ip_j=p_ip^i\,,\,C^{ijk}p_k=C^{ikj}p_k=C^{kij}p_k=0,
\label{I7}
\end{equation}
where $(g^*_{ij})$ is the inverse matrix of $(g^{*ji})$ and we have put $p^i=\frac{1}{2}\frac{\partial K^2}{\partial p_i}\,,\,C^{ijk}=-\frac{1}{4}\frac{\partial^3K^2}{\partial p_i\partial p_j\partial p_k}$.

It is well-known that $g_{ij}$ determines a metric structure on $V(TM)$ and $g^{*ij}$ determines a metric structure on $V(T^*M)$. Similarly, every Finsler structure $F$ on $M$ determines a metric structure $G$ on $V$ by setting
\begin{equation}
\label{I8}
G(X,Y)=g_{ij}(x,y)X_1^{i}(x,y,p)Y_1^j(x,y,p)+g^{*ij}(x,p)X^2_i(x,y,p)Y^2_j(x,y,p),
\end{equation}
for every $X=X_1^{i}(x,y,p)\frac{\partial}{\partial y^{i}}+X^2_i(x,y,p)\frac{\partial}{\partial p_i}$, $Y=Y_1^{j}(x,y,p)\frac{\partial}{\partial y^{j}}+Y^2_j(x,y,p)\frac{\partial}{\partial p_j}\in\Gamma(V)$.

\section{Vertical Liouville foliations on $\mathcal{T}M$}
\setcounter{equation}{0}

In this section we reconsider the vertical Liouville distributions $V_{\mathcal{E}_1}$ and $V_{\mathcal{E}_2}$ from the case of vertical tangent (cotangent) bundle of a Finsler (Cartan) space, see \cite{B-F1, I-M}, for the case of vertical subbundles $V_1$ and $V_2$, respectively, with respect to Liouville vector fields $\mathcal{E}_1$ and $\mathcal{E}_2$. Next we define the Liouville distribution $V_{\mathcal{E}}$ with respect to the  Liouville vector field $\mathcal{E}=\mathcal{E}_1+\mathcal{E}_2$, we prove that it is integrable and we study some of its properties. Also, some links between the vertical Liouville foliations $V_{\mathcal{E}_1}$, $V_{\mathcal{E}_2}$ and $V_{\mathcal{E}}$, respectively, are established.

\subsection{Vertical Liouville distributions $V_{\mathcal{E}_1}$ and $V_{\mathcal{E}_2}$}

Following \cite{B-F1}, \cite{I-M} we define two vertical Liouville distributions on $\mathcal{T}M$ as the complementary orthogonal distributions in $V_1$ and $V_2$  to the line distributions spanned by the  Liouville vector fields $\mathcal{E}_1$ and $\mathcal{E}_2$, respectively. 

By \eqref{I5} and \eqref{I6} we have
\begin{equation}
G(\mathcal{E}_1,\mathcal{E}_1)=F^2.
\label{xII1}
\end{equation}
Using $G$ and $\mathcal{E}_1$, we define the $V_1$-vertical one form $\zeta_1$ by
\begin{equation}
\zeta_1(X_1)=\frac{1}{F}G(X_1,\mathcal{E}_1)\,,\,\forall\,X_1=X_1^{i}(x,y,p)\frac{\partial}{\partial y^{i}}\in\Gamma(V_1).
\label{xII2}
\end{equation}
Let us denote by $\left\{\mathcal{E}_1\right\}$ the line vector bundle over $\mathcal{T}M$ spanned by $\mathcal{E}_1$ and we define the \textit{first vertical Liouville distribution} as the complementary orthogonal distribution $V_{\mathcal{E}_1}$ to $\left\{\mathcal{E}_1\right\}$ in $V_1$ with respect to $G$. Thus, $V_{\mathcal{E}_1}$ is defined by $\zeta_1$, that is
\begin{equation}
\Gamma\left(V_{\mathcal{E}_1}\right)=\{X_1\in\Gamma(V_1)\,:\,\zeta_1(X_1)=0\}.
\label{xII3}
\end{equation}
We get that every $V_1$-vertical vector field $X_1=X_1^{i}(x,y,p)\frac{\partial}{\partial y^{i}}$ can be expressed in the form:
\begin{equation}
X_1=P_1X_1+\frac{1}{F}\zeta_1(X_1)\mathcal{E}_1,
\label{xII4}
\end{equation}
where $P_1$ is the projection morphism of $V_1$ on $V_{\mathcal{E}_1}$. 

Also, by direct calculus, we get
\begin{equation}
G(X_1,P_1Y_1)=G(P_1X_1,P_1Y_1)=G(X_1,Y_1)-\zeta_1(X_1)\zeta_1(Y_1),\,\,\forall\,X_1,Y_1\in\Gamma(V_1).
\label{xII5}
\end{equation}
Let us consider $\{\theta^{i}\}$ the dual basis of $\{\frac{\partial}{\partial y^{i}}\}$. Then, with respect  to the basis $\{\theta^{i}\}$ and $\left\{\theta^{j}\otimes\frac{\partial}{\partial y^i}\right\}$, respectively, $\zeta_1$ and $P_1$ are locally given by
\begin{equation}
\zeta_1=\stackrel{1}{\zeta_{i}}\theta^{i}\,,\,P_1=\stackrel{1}{P^{i}_j}\theta^{j}\otimes\frac{\partial}{\partial y^{i}}\,,\,\stackrel{1}{\zeta_i}=\frac{y_i}{F}\,,\,\stackrel{1}{P^{i}_j}=\delta^{i}_j-\frac{y_jy^{i}}{F^2},
\label{xII6}
\end{equation}
where $\delta_j^i$ are the components of the Kronecker delta.

As usual for tangent bundle of a Finsler space (see Theorem 3.1 from \cite{B-F1}), the first vertical Liouville distribution $V_{\mathcal{E}_1}$ is integrable and it defines a foliation on $\mathcal{T}M$,  called the \textit{first vertical Liouville foliation} on the big-tangent manifold $\mathcal{T}M$. Also, some geometric properties of the leaves of vertical foliation $V_1$ can be derived via the first vertical Liouville foliation $V_{\mathcal{E}_1}$.

Similarly, by \eqref{I5} and \eqref{I7} we have
\begin{equation}
G(\mathcal{E}_2,\mathcal{E}_2)=K^2,
\label{yII1}
\end{equation}
and using $G$ and $\mathcal{E}_2$, we define the $V_2$-vertical one form $\zeta_2$ by
\begin{equation}
\zeta_2(X_2)=\frac{1}{K}G(X_2,\mathcal{E}_2)\,,\,\forall\,X_2=X^2_{i}(x,y,p)\frac{\partial}{\partial p_{i}}\in\Gamma(V_2).
\label{yII2}
\end{equation}
Let us denote by $\left\{\mathcal{E}_2\right\}$ the line vector bundle over $\mathcal{T}M$ spanned by $\mathcal{E}_2$ and we define the \textit{second vertical Liouville distribution} as the complementary orthogonal distribution $V_{\mathcal{E}_2}$ to $\left\{\mathcal{E}_2\right\}$ in $V_2$ with respect to $G$. Thus, $V_{\mathcal{E}_2}$ is defined by $\zeta_2$, that is
\begin{equation}
\Gamma\left(V_{\mathcal{E}_2}\right)=\{X_2\in\Gamma(V_2)\,:\,\zeta_2(X_2)=0\}.
\label{yII3}
\end{equation}
We get that every $V_2$-vertical vector field $X_2=X^2_{i}(x,y,p)\frac{\partial}{\partial p_{i}}$ can be expressed in the form:
\begin{equation}
X_2=P_2X_2+\frac{1}{K}\zeta_2(X_2)\mathcal{E}_2,
\label{yII4}
\end{equation}
where $P_2$ is the projection morphism of $V_2$ on $V_{\mathcal{E}_2}$. 

Similarly, by direct calculus, we get
\begin{equation}
G(X_2,P_2Y_2)=G(P_2X_2,P_2Y_2)=G(X_2,Y_2)-\zeta_2(X_2)\zeta_2(Y_2),\,\,\forall\,X_2,Y_2\in\Gamma(V_2).
\label{yII5}
\end{equation}
Let us consider $\{k_{i}\}$ the dual basis of $\{\frac{\partial}{\partial p_{i}}\}$. Then, with respect  to the basis $\{k_{i}\}$ and $\left\{k_{j}\otimes\frac{\partial}{\partial p_i}\right\}$, respectively, $\zeta_2$ and $P_2$ are locally given by
\begin{equation}
\zeta_2=\stackrel{2}{\zeta^{i}}k_{i}\,,\,P_2=\stackrel{2}{P^{j}_i}k_{j}\otimes\frac{\partial}{\partial p_{i}}\,,\,\stackrel{2}{\zeta^i}=\frac{p^i}{K}\,,\,\stackrel{2}{P^{i}_j}=\delta^{i}_j-\frac{p_jp^{i}}{K^2}.
\label{yII6}
\end{equation}
As usual for cotangent bundle of a Cartan space (see Theorem 2.1 from \cite{I-M}), the second vertical Liouville distribution $V_{\mathcal{E}_2}$ is integrable and it defines a foliation on $\mathcal{T}M$, called the \textit{second vertical Liouville foliation} on the big-tangent manifold $\mathcal{T}M$. Also, some geometric properties of the leaves of vertical foliation $V_2$ can be derived via the second vertical Liouville foliation $V_{\mathcal{E}_2}$.

\subsection{Vertical Liouville distribution $V_{\mathcal{E}}$}

In this subsection we unify the concepts presented in the previous subsection and  we define a vertical Liouville distribution on $\mathcal{T}M$ as the complementary orthogonal distribution in $V$ to the line distribution spanned by the  Liouville vector field $\mathcal{E}=\mathcal{E}_1+\mathcal{E}_2$. We prove that this distribution is an integrable one, and also,  we find some geometric properties of both leaves of Liouville distribution and the vertical distribution on the big-tangent manifold $\mathcal{T}M$. Finally, some links between the vertical Liouville foliations $V_{\mathcal{E}_1}$, $V_{\mathcal{E}_2}$ and $V_{\mathcal{E}}$, respectively, are established.

By \eqref{I5}, \eqref{I6} and \eqref{I7} we have
\begin{equation}
G(\mathcal{E},\mathcal{E})=F^2+K^2.
\label{II1}
\end{equation}
Now, by means of $G$ and $\mathcal{E}$, we define the vertical one form $\zeta$ by
\begin{equation}
\zeta(X)=\frac{1}{\sqrt{F^2+K^2}}G(X,\mathcal{E})\,,\,\forall\,X=X_1^{i}(x,y,p)\frac{\partial}{\partial y^{i}}+X^2_i(x,y,p)\frac{\partial}{\partial p_i}\in\Gamma(V).
\label{II2}
\end{equation}

Let us denote by $\left\{\mathcal{E}\right\}$ the line vector bundle over $\mathcal{T}M$ spanned by $\mathcal{E}$ and we define the \textit{vertical Liouville distribution} as the complementary orthogonal distribution $V_{\mathcal{E}}$ to $\left\{\mathcal{E}\right\}$ in $V$ with respect to $G$. Thus, $V_{\mathcal{E}}$ is defined by $\zeta$, that is
\begin{equation}
\Gamma\left(V_{\mathcal{E}}\right)=\{X\in\Gamma(V)\,:\,\zeta(X)=0\}.
\label{II3}
\end{equation}
We get that every vertical vector field $X=X_1^{i}(x,y,p)\frac{\partial}{\partial y^{i}}+X^2_i(x,y,p)\frac{\partial}{\partial p_i}$ can be expressed in the form:
\begin{equation}
X=PX+\frac{1}{\sqrt{F^2+K^2}}\zeta(X)\mathcal{E},
\label{II4}
\end{equation}
where $P$ is the projection morphism of $V$ on $V_{\mathcal{E}}$. 

Also, by direct calculus, we get
\begin{equation}
G(X,PY)=G(PX,PY)=G(X,Y)-\zeta(X)\zeta(Y),\,\,\forall\,X,Y\in\Gamma(V).
\label{II5}
\end{equation}
With respect  to the basis $\{\theta^{i}, k_i\}$ and $\left\{\theta^{j}\otimes\frac{\partial}{\partial y^i}, \theta^{j}\otimes\frac{\partial}{\partial p_i}, k_{j}\otimes\frac{\partial}{\partial y^i}, k_{j}\otimes\frac{\partial}{\partial p_i}\right\}$, respectively, $\zeta$ and $P$ are locally given by
\begin{equation}
\zeta=\zeta_i\theta^{i}+\zeta^{i}k_i\,,\,P=\stackrel{1}{P^{i}_j}\theta^{j}\otimes\frac{\partial}{\partial y^{i}}+\stackrel{2}{P^{j}_i}k_j\otimes\frac{\partial}{\partial p_{i}}+\stackrel{3}{P_{ij}}\theta^{j}\otimes\frac{\partial}{\partial p_{i}}+\stackrel{4}{P^{ij}}k_j\otimes\frac{\partial}{\partial y^{i}},
\label{II6}
\end{equation}
where their local components are expressed by
\begin{equation}
\label{II7}
\zeta_i=\frac{y_i}{\sqrt{F^2+K^2}}\,,\,\zeta^{i}=\frac{p^{i}}{\sqrt{F^2+K^2}},
\end{equation}
\begin{equation}
\label{II8}
\stackrel{1}{P^{i}_j}=\delta^{i}_j-\frac{y_jy^{i}}{F^2+K^2}\,,\,\stackrel{2}{P^{i}_j}=\delta^{i}_j-\frac{p^ip_j}{F^2+K^2}\,,\,\stackrel{3}{P_{ij}}=-\frac{y_jp_i}{F^2+K^2}\,,\,\stackrel{4}{P^{ij}}=-\frac{p^jy^{i}}{F^2+K^2}.
\end{equation}
\begin{remark}
We have the following relations between $\zeta$, $P$, $\zeta_1$, $\zeta_2$, $P_1$ and $P_2$:
\begin{equation}
\label{a1}
\zeta(X)=\frac{F}{\sqrt{F^2+K^2}}\zeta_1(X_1)+\frac{K}{\sqrt{F^2+K^2}}\zeta_2(X_2),
\end{equation} 
\begin{equation}
\label{a2}
P(X)=P_1(X_1)+P_2(X_2)+\frac{1}{F^2+K^2}\left(\frac{\zeta_1(X_1)}{F}-\frac{\zeta_2(X_2)}{K}\right)(K^2\mathcal{E}_1-F^2\mathcal{E}_2),
\end{equation}
for every vertical vector field $X=X_1+X_2=X_1^{i}(x,y,p)\frac{\partial}{\partial y^{i}}+X^2_i(x,y,p)\frac{\partial}{\partial p_i}$.
\end{remark}
\begin{theorem}
\label{integrabil}
The vertical Liouville distribution $V_{\mathcal{E}}$ is integrable and it defines a foliation on $\mathcal{T}M$, called vertical Liouville foliation on the big-tangent manifold $\mathcal{T}M$.
\end{theorem}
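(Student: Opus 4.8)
The plan is to invoke the Frobenius theorem, so it suffices to check that $V_{\mathcal{E}}$ has constant rank and is involutive. First I would observe that the ambient vertical distribution $V=V_1\oplus V_2$ is itself involutive for the trivial reason that its natural local frame $\{\tfrac{\partial}{\partial y^{i}},\tfrac{\partial}{\partial p_i}\}$ consists of pairwise commuting vector fields; hence the bracket of any two vertical vector fields is again vertical. Thus the entire content of the theorem reduces to showing that $\zeta([X,Y])=0$ whenever $X,Y\in\Gamma(V_{\mathcal{E}})$.

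The key step is to recognize $\zeta|_V$ as (a normalization of) the vertical differential of a single function. Writing $X=X_1^{i}\tfrac{\partial}{\partial y^{i}}+X^2_i\tfrac{\partial}{\partial p_i}$, formulas \eqref{I8}, \eqref{I6}, \eqref{I7} give $G(X,\mathcal{E})=y_iX_1^{i}+p^{i}X^2_i$, so by \eqref{II2}, \eqref{II7},
\[
\zeta(X)=\frac{1}{\sqrt{F^2+K^2}}\left(y_iX_1^{i}+p^{i}X^2_i\right).
\]
Now set $\Phi:=\tfrac12(F^2+K^2)$, a smooth positive function on $\tau_1^{-1}(TM^0)\cap\tau_2^{-1}(T^*M^0)$. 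Since $y_i=\tfrac12\tfrac{\partial F^2}{\partial y^{i}}$ and $p^{i}=\tfrac12\tfrac{\partial K^2}{\partial p_i}$, we have $\tfrac{\partial\Phi}{\partial y^{i}}=y_i$ and $\tfrac{\partial\Phi}{\partial p_i}=p^{i}$, whence for vertical $X$,
\[
X(\Phi)=X_1^{i}\frac{\partial\Phi}{\partial y^{i}}+X^2_i\frac{\partial\Phi}{\partial p_i}=y_iX_1^{i}+p^{i}X^2_i=\sqrt{F^2+K^2}\,\zeta(X).
\]
Consequently $\Gamma(V_{\mathcal{E}})=\{X\in\Gamma(V):X(\Phi)=0\}$, i.e. $V_{\mathcal{E}}=V\cap\ker d\Phi$; and since $\zeta|_V$ is nowhere zero on this domain (because $(y_i,p^{i})\neq0$ there), $V_{\mathcal{E}}$ has constant rank $2n-1$.

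With this reformulation involutivity is immediate: for $X,Y\in\Gamma(V_{\mathcal{E}})$ the bracket $[X,Y]$ is vertical by the first step, and
\[
[X,Y](\Phi)=X(Y(\Phi))-Y(X(\Phi))=X(0)-Y(0)=0,
\]
so $[X,Y]\in\Gamma(V_{\mathcal{E}})$. By the Frobenius theorem $V_{\mathcal{E}}$ is integrable and defines the claimed foliation on $\mathcal{T}M$.

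I expect the only genuine obstacle to be conceptual rather than computational: spotting that the normalizing factor $1/\sqrt{F^2+K^2}$ is irrelevant to the kernel, and that the unnormalized vertical form with components $(y_i,p^{i})$ is exactly the vertical derivative of the potential $\Phi=\tfrac12(F^2+K^2)$. Once $V_{\mathcal{E}}$ is identified as $V\cap\ker d\Phi$, integrability follows with no further work. An alternative, more pedestrian route would compute $d\zeta(X,Y)=-\zeta([X,Y])$ directly from the local components \eqref{II7}--\eqref{II8}; this also works but obscures the structural reason and forces one to grind through the $C_{ijk}$, $C^{ijk}$ terms via \eqref{I6}, \eqref{I7}.
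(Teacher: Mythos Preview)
Your proof is correct, but it takes a genuinely different route from the paper's main argument. The paper proceeds computationally: starting from the defining relation $g_{ij}X_1^{i}y^{j}+g^{*ij}X^2_ip_j=0$, it differentiates with respect to $y^k$ and $p_k$, invokes the homogeneity identities $\tfrac{\partial g_{ij}}{\partial y^k}y^{j}=0$ and $\tfrac{\partial g^{*ij}}{\partial p_k}p_j=0$, and then expands $G([X,Y],\mathcal{E})$ term by term to see that it vanishes. Your argument instead recognizes that the unnormalized form $y_i\theta^{i}+p^{i}k_i$ is exactly the vertical differential of $\Phi=\tfrac12(F^2+K^2)$, so $V_{\mathcal{E}}=V\cap\ker d\Phi$ and involutivity is a one-line consequence of the derivation property of the bracket. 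The paper does give, in a Remark following its proof, an alternative argument closer in spirit to yours (applying the local frame $P(\partial/\partial y^{j}),P(\partial/\partial p_j)$ to $\sqrt{F^2+K^2}$ and using $\mathcal{E}(\sqrt{F^2+K^2})=\sqrt{F^2+K^2}$), but even that version writes out the brackets in coordinates rather than invoking $[X,Y](\Phi)=0$ directly. Your approach is shorter and exposes the structural reason; the paper's computation is more explicit and does not require spotting the potential $\Phi$.
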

\begin{proof}
Follows using an argument similar to that used in \cite{B-F1}. Let $X,Y\in\Gamma\left(V_{\mathcal{E}}\right)$. As $V$ is an integrable distribution on $\mathcal{T}M$, it is sufficient to prove that $[X,Y]$ has no component with respect to $\mathcal{E}$. 

It is easy to see that a vertical vector field $X=X_1^{i}(x,y,p)\frac{\partial}{\partial y^{i}}+X^2_i(x,y,p)\frac{\partial}{\partial p_i}$ is in $\Gamma\left(V_{\mathcal{E}}\right)$ if and only if
\begin{equation}
g_{ij}(x,y)X_1^{i}y^{j}+g^{*ij}(x,p)X^2_ip_j=0.
\label{II9}
\end{equation}
Differentiate ({\ref{II9}}) with respect to $y^k$ we get
\begin{equation}
\frac{\partial g_{ij}}{\partial y^k}X_1^iy^j+g_{ik}X_1^i+g_{ij}\frac{\partial X_1^i}{\partial y^k}y^j+g^{*ij}p_j\frac{\partial X^2_i}{\partial y^{k}}=0\,,\,\forall\,k=1,\ldots,n
\label{II10}
\end{equation}
and taking into account the relation $\frac{\partial g_{ij}}{\partial y^k}y^j=0$ (see \eqref{I6}), one gets
\begin{equation}
g_{ik}X_1^i+g_{ij}y^j\frac{\partial X_1^i}{\partial y^k}+g^{*ij}p_j\frac{\partial X^2_i}{\partial y^{k}}=0\,,\,\forall\,k=1,\ldots,n.
\label{II11}
\end{equation}
Similarly, differentiate ({\ref{II9}}) with respect to $p_k$ we get
\begin{equation}
g_{ij}y^j\frac{\partial X_1^i}{\partial p_k}+g^{*ik}X^2_i+\frac{\partial g^{*ij}}{\partial p_k}X^2_ip_j+g^{*ij}p_j\frac{\partial X^2_i}{\partial p_k}=0\,,\,\forall\,k=1,\ldots,n
\label{II12}
\end{equation}
and taking into account the relation $\frac{\partial g^{*ij}}{\partial p_k}p_j=0$ (see \eqref{I7}), one gets
\begin{equation}
g^{*ik}X^2_i+g_{ij}y^j\frac{\partial X_1^i}{\partial p_k}+g^{*ij}p_j\frac{\partial X^2_i}{\partial p_k}=0\,,\,\forall\,k=1,\ldots,n.
\label{II13}
\end{equation}
Let $X=X_1^{i}(x,y,p)\frac{\partial}{\partial y^{i}}+X^2_i(x,y,p)\frac{\partial}{\partial p_i}$, $Y=Y_1^{j}(x,y,p)\frac{\partial}{\partial y^{j}}+Y^2_j(x,y,p)\frac{\partial}{\partial p_j}\in\Gamma(V)$. Then, by direct calculations using \eqref{II11} and \eqref{II13}, we have
\begin{eqnarray*}
G([X,Y],\mathcal{E}) &=& g_{jk}y^k\left(X_1^{i}\frac{\partial Y_1^j}{\partial y^{i}}-Y_1^{i}\frac{\partial X_1^{j}}{\partial y^{i}}\right)+g^{*ik}p_kX_1^j\frac{\partial Y^2_i}{\partial y^j}-g_{ik}y^kY^2_j\frac{\partial X_1^{i}}{\partial p_j}\\
&=&+g_{ik}y^kX^2_j\frac{\partial Y_1^{i}}{\partial p_j}-g^{*ik}p_kY_1^j\frac{\partial X^2_i}{\partial y^j}+g^{*ik}p_k\left(X^2_j\frac{\partial Y^2_i}{\partial p_j}-Y^2_j\frac{\partial X^2_i}{\partial p_j}\right)\\
&=&-g_{ij}Y_1^{i}X_1^j+g_{ij}X_1^{i}Y_1^j-g^{*ij}Y^2_iX^2_j+g^{*ij}X^2_iY^2_j\\
&=&0
\end{eqnarray*}
which completes the proof.
\end{proof}
\begin{remark}
The proof of Theorem \ref{integrabil} can be also obtained using an argument similar to \cite{B-Mu}. More exactly, if we consider $P(\frac{\partial}{\partial y^{j}})=\stackrel{1}{P^{i}_j}\frac{\partial}{\partial y^{i}}+\stackrel{3}{P_{ij}}\frac{\partial}{\partial p_{i}}$ and $P(\frac{\partial}{\partial p_{j}})=\stackrel{4}{P^{ij}}\frac{\partial}{\partial y^{i}}+\stackrel{2}{P^j_{i}}\frac{\partial}{\partial p_{i}}$, by direct calculus we obtain
\begin{equation}
\label{r1}
P(\frac{\partial}{\partial y^{j}})(\sqrt{F^2+K^2})=P(\frac{\partial}{\partial p_{j}})(\sqrt{F^2+K^2})=0.
\end{equation}
Now, since $V=V_{\mathcal{E}}\oplus\{\mathcal{E}\}$ is integrable, the Lie brackets of vector fields from $V_{\mathcal{E}}$ are given by
\begin{equation}
\label{r3}
\left[P(\frac{\partial}{\partial y^{i}}),P(\frac{\partial}{\partial y^{j}})\right]=A_{ij}^kP(\frac{\partial}{\partial y^{k}})+B_{ijk}P(\frac{\partial}{\partial p_{k}})+C_{ij}\mathcal{E},
\end{equation} 
\begin{equation}
\label{r4}
\left[P(\frac{\partial}{\partial y^{i}}),P(\frac{\partial}{\partial p_{j}})\right]=D_{i}^{jk}P(\frac{\partial}{\partial y^{k}})+E^j_{ik}P(\frac{\partial}{\partial p_{k}})+F_i^j\mathcal{E},
\end{equation}
\begin{equation}
\label{r5}
\left[P(\frac{\partial}{\partial p_{i}}),P(\frac{\partial}{\partial p_{j}})\right]=G^{ijk}P(\frac{\partial}{\partial y^{k}})+H^{ij}_{k}P(\frac{\partial}{\partial p_{k}})+L^{ij}\mathcal{E},
\end{equation}
for some locally defined functions $A_{ij}^k$, $B_{ijk}$, $C_{ij}$, $D_{i}^{jk}$, $E^j_{ik}$, $F_i^j$, $G^{ijk}$, $H^{ij}_{k}$ and $L^{ij}$, respectively. We notice that by the homogeneity condition of $F$ and $K$ we have $\mathcal{E}(\sqrt{F^2+K^2})=\sqrt{F^2+K^2}$. Now, if we apply the vector fields in both sides of formulas \eqref{r3}, \eqref{r4} and \eqref{r5} to the function $\sqrt{F^2+K^2}$ and using \eqref{r1}, we obtain $C_{ij}\sqrt{F^2+K^2}=F_i^j\sqrt{F^2+K^2}=L^{ij}\sqrt{F^2+K^2}=0$. This implies that $C_{ij}=F_i^j=L^{ij}=0$, and then the vertical Liouville distribution $V_{\mathcal{E}}$ is integrable.
\end{remark}

As usual, the Theorem \ref{integrabil}, we may say that the geometry of the leaves of vertical foliation $V$ should be derived from the geometry of the leaves of vertical Liouville foliation $V_\mathcal{E}$ and of integral curves
of $\mathcal{E}$. In order to obtain this interplay, we consider a leaf $L_V$ of $V$ given locally by $x^i=a^i$, $i=1,\ldots,n$, where the $a^i$'s are constants. Then, $g_{ij}(a,y)$ and $g^{*ij}(a,p)$ are
the components of a Riemannian metric $G_{L_V}=G|_{L_V}$ on $L_V$. If we denote by  $\nabla$ the
Levi-Civita connection on $L_V$ with respect to $G_{L_V}$ then its local expression is
\begin{equation}
\nabla_{\frac{\partial}{\partial y^{i}}}\frac{\partial}{\partial y^j}=C^k_{ij}(a,y)\frac{\partial}{\partial y^k}\,,\,\nabla_{\frac{\partial}{\partial y^{i}}}\frac{\partial}{\partial p_j}=0\,,\,\nabla_{\frac{\partial}{\partial p_i}}\frac{\partial}{\partial y^j}=0\,,\,\nabla_{\frac{\partial}{\partial p_{i}}}\frac{\partial}{\partial p_j}=C_k^{ij}(a,p)\frac{\partial}{\partial p_k},
\label{II14}
\end{equation}
where $C^k_{ij}(a,y)=\frac{1}{2}g^{lk}(a,y)\frac{\partial g_{jl}(a,y)}{\partial y^{i}}$ and $C^{ij}_k(a,p)=-\frac{1}{2}g^*_{lk}(a,p)\frac{\partial g^{*jl}(a,p)}{\partial p_i}$.

Contracting $C^k_{ij}(a,y)$ by $y^j$ and $C^{ij}_k(a,p)$ by $p_j$, respectively, we deduce 
\begin{equation}
C^k_{ij}(a,y)y^j=0\,\,,\,\,C_k^{ij}(a,p)p_j=0.
\label{II15}
\end{equation}
In the following lemma we obtain the covariant derivatives with respect to $\nabla$ of $\mathcal{E}$, $\zeta$ and $P$, respectively.
\begin{lemma}
On any leaf $L_V$ of $V$, we have
\begin{equation}
\nabla_X\left(\frac{\mathcal{E}}{\sqrt{F^2+K^2}}\right)=\frac{PX}{\sqrt{F^2+K^2}},
\label{II16}
\end{equation}
\begin{equation}
\left(\nabla_X\zeta\right)Y=\frac{1}{\sqrt{F^2+K^2}}G_{L_V}(PX,PY),
\label{II17}
\end{equation}
and
\begin{equation}
\left(\nabla_XP\right)Y=-\frac{1}{F^2+K^2}\left[G_{L_V}(PX,PY)\mathcal{E}+\sqrt{F^2+K^2}\zeta(Y)PX\right]
\label{II18}
\end{equation}
for any $X,Y\in\Gamma\left(TL_V\right)$.
\end{lemma}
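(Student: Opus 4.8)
The plan is to reduce all three identities to a single geometric fact, namely that the rescaled Liouville field $N:=\mathcal{E}/\sqrt{F^2+K^2}$ is a unit field whose covariant derivative reproduces the projection $P$. The key step, which I expect to be the main obstacle, is to establish that
\[
\nabla_X\mathcal{E}=X\qquad\text{for every }X\in\Gamma(TL_V).
\]
Writing $\mathcal{E}=y^i\frac{\partial}{\partial y^i}+p_i\frac{\partial}{\partial p_i}$ and differentiating along $\frac{\partial}{\partial y^j}$ and $\frac{\partial}{\partial p_j}$ by means of the connection coefficients \eqref{II14}, all mixed terms drop out and one is left with $\frac{\partial}{\partial y^j}+y^iC^k_{ji}(a,y)\frac{\partial}{\partial y^k}$ and $\frac{\partial}{\partial p_j}+p_iC^{ji}_k(a,p)\frac{\partial}{\partial p_k}$. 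Since the Christoffel symbols are totally symmetric in their $y$- (resp. $p$-) indices, the homogeneity relations \eqref{II15} force $y^iC^k_{ji}=0$ and $p_iC^{ji}_k=0$. This is exactly the point where the homogeneity of $F$ and $K$ enters; everything afterwards is formal.

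Next I would record the derivative of the norm. From $y_i=\tfrac12\frac{\partial F^2}{\partial y^i}$ and $p^i=\tfrac12\frac{\partial K^2}{\partial p_i}$ one finds $X(F^2+K^2)=2(y_iX_1^i+p^iX^2_i)=2G(X,\mathcal{E})$, so that by \eqref{II2}
\[
X\!\left(\sqrt{F^2+K^2}\right)=\frac{G(X,\mathcal{E})}{\sqrt{F^2+K^2}}=\zeta(X),
\]
which identifies $N$ as the gradient of $\sqrt{F^2+K^2}$ and gives $\zeta(\cdot)=G(\cdot,N)$. Formula \eqref{II16} then follows by the product rule: $\nabla_XN=X\!\left(\tfrac{1}{\sqrt{F^2+K^2}}\right)\mathcal{E}+\tfrac{1}{\sqrt{F^2+K^2}}\nabla_X\mathcal{E}$, where the first term is $-\tfrac{\zeta(X)}{F^2+K^2}\mathcal{E}$ and the second is $\tfrac{X}{\sqrt{F^2+K^2}}$; substituting the decomposition $X=PX+\zeta(X)N$ from \eqref{II4} cancels the $\mathcal{E}$-components and leaves $\tfrac{PX}{\sqrt{F^2+K^2}}$.

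For \eqref{II17} I would use $\zeta(Y)=G(Y,N)$ together with metric compatibility of the Levi-Civita connection to write $(\nabla_X\zeta)Y=X\big(G(Y,N)\big)-G(\nabla_XY,N)=G(Y,\nabla_XN)$; inserting \eqref{II16} and then \eqref{II5} produces $\tfrac{1}{\sqrt{F^2+K^2}}G_{L_V}(PX,PY)$. Finally, for \eqref{II18} I would regard $P$ as $\mathrm{Id}-\zeta\otimes N$, i.e. $PY=Y-\zeta(Y)N$, and differentiate; the $\nabla_XY$ terms cancel against $P(\nabla_XY)$, leaving $(\nabla_XP)Y=-(\nabla_X\zeta)(Y)\,N-\zeta(Y)\nabla_XN$, and substituting \eqref{II16} and \eqref{II17} and clearing the factor $\tfrac{1}{F^2+K^2}$ gives the stated formula. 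In effect, \eqref{II16}–\eqref{II18} are the Gauss–Weingarten relations for the unit gradient field $N$ of the ``distance'' function $\sqrt{F^2+K^2}$, so once $\nabla_X\mathcal{E}=X$ is secured the remainder is bookkeeping.
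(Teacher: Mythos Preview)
Your argument is correct. The paper's proof proceeds differently: for \eqref{II16} and \eqref{II17} it carries out a direct local-coordinate computation, expanding $\nabla_X\bigl(\mathcal{E}/\sqrt{F^2+K^2}\bigr)$ and $(\nabla_X\zeta)Y$ in the frames $\{\partial/\partial y^i,\partial/\partial p_i\}$ and matching the result term by term against the local components $\stackrel{1}{P^i_j},\stackrel{2}{P^i_j},\stackrel{3}{P_{ij}},\stackrel{4}{P^{ij}}$ of $P$ from \eqref{II8}; only for \eqref{II18} does it argue, as you do, by combining \eqref{II4}, \eqref{II16} and \eqref{II17}. Your route isolates the single geometric input $\nabla_X\mathcal{E}=X$ (a consequence of the symmetry of the lower indices of $C^k_{ij}$, $C^{ij}_k$ together with \eqref{II15}) and the identity $X(\sqrt{F^2+K^2})=\zeta(X)$, then derives all three formulas from the Gauss--Weingarten viewpoint $P=\mathrm{Id}-\zeta\otimes N$ using only metric compatibility and \eqref{II5}. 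This is cleaner and makes transparent why the three identities are not independent; the paper's computation, in exchange, verifies the local formulas \eqref{II7}--\eqref{II8} along the way. One small remark: your phrase ``totally symmetric in their $y$-indices'' is slightly loose---what you actually use is the symmetry $C^k_{ij}=C^k_{ji}$ of the Levi--Civita coefficients, which together with \eqref{II15} gives $y^iC^k_{ji}=0$ (and analogously for $p_iC^{ji}_k$).
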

\begin{proof}
We take $X=X_1^i(a,y,p)\frac{\partial}{\partial y^i}+X^2_i(a,y,p)\frac{\partial}{\partial p_i},\,Y=Y_1^j(a,y,p)\frac{\partial}{\partial y^j}+Y^2_j(a,y,p)\frac{\partial}{\partial p_j}\in\Gamma\left(TL_V\right)$ and the relation \eqref{II16} follows by:
\begin{eqnarray*}
\nabla_X\left(\frac{\mathcal{E}}{\sqrt{F^2+K^2}}\right)&=&\frac{X_1^i}{\sqrt{F^2+K^2}}\left(\delta_i^j-\frac{y^jy_i}{F^2+K^2}\frac{\partial}{\partial y^j}-\frac{p_jy_i}{F^2+K^2}\frac{\partial}{\partial p_j}\right)\\
&&+ \frac{X^2_i}{\sqrt{F^2+K^2}}\left(\delta_j^i-\frac{p_jp^{i}}{F^2+K^2}\frac{\partial}{\partial p_j}-\frac{y^jp^{i}}{F^2+K^2}\frac{\partial}{\partial y^j}\right)\\
&=&\frac{1}{\sqrt{F^2+K^2}}\left(X_1^{i}\stackrel{1}{P^j_i}\frac{\partial}{\partial y^j}+X_1^{i}\stackrel{3}{P_{ji}}\frac{\partial}{\partial p_j}+X^2_i\stackrel{4}{P^{ji}}\frac{\partial}{\partial y^j}+X^2_i\stackrel{2}{P^i_j}\frac{\partial}{\partial p_j}\right)\\
&=&\frac{PX}{\sqrt{F^2+K^2}}.
\end{eqnarray*}
For the relation \eqref{II17} we have
\begin{eqnarray*}
\left(\nabla_X\zeta\right)Y&=&X(\zeta(Y))-\zeta\left(\nabla_XY\right)\\
&=& X_1^iY_1^j\frac{\partial\zeta_j}{\partial y^{i}}+X_1^iY^2_j\frac{\partial\zeta^j}{\partial y^{i}}+X^2_iY_1^j\frac{\partial\zeta_j}{\partial p_{i}}+X^2_iY^2_j\frac{\partial\zeta^j}{\partial p_{i}}\\
&=&\frac{X_1^iY_1^j}{\sqrt{F^2+K^2}}\left(g_{ij}-\frac{y_iy_j}{F^2+K^2}\right)-\frac{X_1^iY^2_jp^jy_i}{(F^2+K^2)\sqrt{F^2+K^2}}\\
&&-\frac{X^2_iY_1^jy_jp^{i}}{(F^2+K^2)\sqrt{F^2+K^2}}+\frac{X^2_iY^2_j}{\sqrt{F^2+K^2}}\left(g^{*ij}-\frac{p^jp^{i}}{F^2+K^2}\right).
\end{eqnarray*}
On the other hand we have
\begin{eqnarray*}
G_{L_V}(PX,PY)&=&G_{L_V}(X,Y)-\zeta(X)\zeta(Y)\\
&=&X_1^iY_1^jg_{ij}+X^2_iY^2_jg^{*ij}-\frac{(X_1^{i}y_i+X^2_{i}p^i)(Y_1^{j}y_j+Y^2_{j}p^j)}{F^2+K^2}
\end{eqnarray*}
and the relation \eqref{II17} follows easy.

The relation \eqref{II18} folows using \eqref{II4}, \eqref{II16} and \eqref{II17}. 
\end{proof}

\begin{theorem}
\label{t2}
Let $(M,F)$ be a $n$-dimensional Finsler space and $L_V$, $L_{V_{\mathcal{E}}}$ and $\gamma$ be a leaf of $V$, a leaf of $V_{\mathcal{E}}$ that lies in $L_V$, and an integral curve of $\frac{\mathcal{E}}{\sqrt{F^2+K^2}}$, respectively. Then the following assertions are valid:
\begin{enumerate}
\item[i)] $\gamma$ is a geodesic of $L_V$ with respect to $\nabla$.
\item[ii)] $L_{V_{\mathcal{E}}}$ is totally umbilical immersed in $L_V$.
\item[iii)] $L_{V_{\mathcal{E}}}$ lies in the generalized indicatrix $I_a=\{(y,p)\in T_aM^0\oplus T_a^*M^0\,:\,F^2(a,y)+K^2(a,p)=1\}$ and has constant mean curvature equal to $-1$.
\end{enumerate}
\end{theorem}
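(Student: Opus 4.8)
The plan is to treat $L_{V_{\mathcal{E}}}$ as a hypersurface of the Riemannian leaf $(L_V,G_{L_V})$ with tangent bundle $V_{\mathcal{E}}$ and unit normal $U:=\mathcal{E}/\sqrt{F^2+K^2}$ (indeed $G(U,U)=1$ by \eqref{II1}, and $V=V_{\mathcal{E}}\oplus\{\mathcal{E}\}$ is an orthogonal splitting); all three assertions then follow by reading off the covariant-derivative formulas of the preceding Lemma. The one fact I would record first is that $PX=X$ for $X\in\Gamma(V_{\mathcal{E}})$ while $P\mathcal{E}=0$, since $P$ projects $V$ onto $V_{\mathcal{E}}=\{\mathcal{E}\}^{\perp}$. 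For (i), the curve $\gamma$ satisfies $\dot\gamma=U$, so applying \eqref{II16} with $X=U$ gives $\nabla_{\dot\gamma}\dot\gamma=\nabla_U U=PU/\sqrt{F^2+K^2}=0$, because $U$ is proportional to $\mathcal{E}$; hence $\gamma$ is a geodesic of $L_V$ with respect to $\nabla$.

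For (ii) I would compute the shape operator $A_U$ of $L_{V_{\mathcal{E}}}$ in $L_V$. For $X\in\Gamma(V_{\mathcal{E}})=\Gamma(TL_{V_{\mathcal{E}}})$, formula \eqref{II16} together with $PX=X$ yields $\nabla_X U=X/\sqrt{F^2+K^2}$, which is itself tangent to $L_{V_{\mathcal{E}}}$, so $A_U X=-\nabla_X U=-X/\sqrt{F^2+K^2}$. Thus $A_U=-(F^2+K^2)^{-1/2}\,\mathrm{Id}$ is a pointwise scalar multiple of the identity, which is exactly the condition of total umbilicity; equivalently the second fundamental form is $h(X,Y)=-(F^2+K^2)^{-1/2}G_{L_V}(X,Y)\,U$, in accordance with \eqref{II18} restricted to $V_{\mathcal{E}}$ (where $\zeta(Y)=0$).

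For (iii) I would first check that $F^2+K^2$ is constant along the leaf. Using $\partial(F^2)/\partial y^k=2y_k$, $\partial(K^2)/\partial p_k=2p^k$ and $\partial(F^2)/\partial p_k=\partial(K^2)/\partial y^k=0$, any $X=X_1^i\frac{\partial}{\partial y^i}+X^2_i\frac{\partial}{\partial p_i}\in\Gamma(V_{\mathcal{E}})$ satisfies $X(F^2+K^2)=2(y_iX_1^i+p^iX^2_i)=2\,G(X,\mathcal{E})=2\sqrt{F^2+K^2}\,\zeta(X)=0$ by the defining condition \eqref{II3}. Hence $L_{V_{\mathcal{E}}}$ lies in a level set of $F^2+K^2$, and the normalized level $F^2+K^2=1$ is precisely the generalized indicatrix $I_a$. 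The mean curvature, read off as the umbilical coefficient from (ii), is $-(F^2+K^2)^{-1/2}$, which equals $-1$ on $I_a$.

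Once the Lemma is available the computations are routine, so the only genuinely delicate point is the normalization in (iii): the argument shows only that each leaf sits inside some hypersurface $F^2+K^2=c$, and it is the choice $c=1$ that produces the clean value $-1$. I would therefore be careful to fix the averaging and sign conventions for the mean curvature (here the umbilical coefficient relative to the unit normal $U$) and to state explicitly that the value $-1$ is the one attained on the indicatrix, the general formula being $-(F^2+K^2)^{-1/2}$.
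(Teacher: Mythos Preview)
Your proof is correct and follows essentially the same line as the paper's: both use the Lemma's formula \eqref{II16} to obtain (i) by taking $X=U$, derive (ii) from the resulting Weingarten/second-fundamental-form relation, and identify $L_{V_{\mathcal{E}}}$ with a level set of $F^2+K^2$ for (iii). Your caveat about the normalization $c=1$ is well placed---the paper likewise infers $L_{V_{\mathcal{E}}}\subset I_a$ from the coincidence of unit normals without singling out the specific level, so the value $-1$ should indeed be read as the one attained on the indicatrix, the general umbilical coefficient being $-(F^2+K^2)^{-1/2}$.
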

\begin{proof}
Replace $X$ by $\frac{\mathcal{E}}{\sqrt{F^2+K^2}}$ in \eqref{II16} and we obtain i). Taking into account that $\frac{\mathcal{E}}{\sqrt{F^2+K^2}}$ is the unit normal vector field of $L_{V_{\mathcal{E}}}$, the second fundamental form $B$ of $L_{V_{\mathcal{E}}}$ as a hypersurface of $L_V$ is given by
\begin{equation}
\label{II19}
B(X,Y)=\frac{1}{\sqrt{F^2+K^2}}G_{L_V}\left(\nabla_XY,\mathcal{E}\right)\,,\,\forall\,X,Y\in\Gamma\left(TL_{V_{\mathcal{E}}}\right).
\end{equation}
On the other hand, by using \eqref{II16} and taking into account that $G_{L_V}$ is parallel with respect to $\nabla$, we deduce that
\begin{equation}
\label{II20}
G_{L_V}\left(\nabla_XY,\mathcal{E}\right)=-G_{L_V}(X,Y)\,,\,\forall\,X,Y\in\Gamma\left(TL_{V_{\mathcal{E}}}\right).
\end{equation}
Hence,
\begin{equation}
\label{II21}
B(X,Y)=-\frac{1}{\sqrt{F^2+K^2}}G_{L_V}(X,Y), \,\forall\,X,Y\in\Gamma\left(TL_{V_{\mathcal{E}}}\right),
\end{equation}
that is, $L_{V_{\mathcal{E}}}$ is totally umbilical immersed in $L_V$. Now, we have 
\begin{equation}
\label{II22}
\frac{g_{ij}y^{i}}{\sqrt{F^2+K^2}}+\frac{g^{*ij}p_i}{\sqrt{F^2+K^2}}=\frac{\partial\sqrt{F^2+K^2}}{\partial y^j}+\frac{\partial\sqrt{F^2+K^2}}{\partial p_j}
\end{equation}
which says that $\frac{\mathcal{E}}{\sqrt{F^2+K^2}}$ is a unit normal vector field for both $L_{V_{\mathcal{E}}}$ and the component $I_a$. Thus, $L_{V_{\mathcal{E}}}$ lies in $I_a$ and $F^2(a,y)+K^2(a,p)=1$ at any point $(y,p)\in L_{V_{\mathcal{E}}}$. Then \eqref{II21} becomes
\begin{equation}
\label{II23}
B(X,Y)=-G_{L_V}(X,Y),\,\forall\,X,Y\in\Gamma\left(TL_{V_{\mathcal{E}}}\right)
\end{equation}
which implies that
\begin{equation}
\label{II24}
\frac{1}{2n-1}\sum_{i=1}^{2n-1}\varepsilon_iB(E_i,E_i)=-1,
\end{equation}
where $\{E_i\}$ ia an orthonormal frame field on $L_{V_{\mathcal{E}}}$ of signature $\{\varepsilon_i\}$. Hence, the mean curvature of $L_{V_{\mathcal{E}}}$ is $-1$ which completes the proof.
\end{proof}
\begin{theorem}
\label{t3}
Let $(M,F)$ be a $n$-dimensional Finsler space and $L_V$ be a leaf of the vertical foliation $V$. Then the sectional curvature of any nondegenerate plane section on $L_V$ which contain the vertical Liouville vector field $\mathcal{E}$ is equal to zero.
\end{theorem}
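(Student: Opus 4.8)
The plan is to reduce everything to showing that the curvature operator of $\nabla$ annihilates the Liouville field, i.e. $R(X,Y)\mathcal{E}=0$ for all $X,Y\in\Gamma(TL_V)$, where $R$ denotes the curvature tensor of $\nabla$. Indeed, for a nondegenerate plane spanned by $X$ and $\mathcal{E}$ the sectional curvature is
\[
K(X,\mathcal{E})=\frac{G_{L_V}(R(X,\mathcal{E})\mathcal{E},X)}{G_{L_V}(X,X)\,G_{L_V}(\mathcal{E},\mathcal{E})-G_{L_V}(X,\mathcal{E})^2},
\]
and the denominator is nonzero precisely because the plane is nondegenerate; hence it suffices to make the numerator vanish, and in fact $R(X,\mathcal{E})\mathcal{E}=0$ will follow from the stronger fact $R(\cdot,\cdot)\mathcal{E}=0$. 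Thus the whole matter reduces to understanding the action of $\nabla$ on $\mathcal{E}$.

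The crucial step is the identity $\nabla_X\mathcal{E}=X$ for every $X\in\Gamma(TL_V)$. I would obtain it directly from the local expression of $\nabla$ in \eqref{II14} together with the contractions \eqref{II15}: writing $\mathcal{E}=y^i\frac{\partial}{\partial y^i}+p_i\frac{\partial}{\partial p_i}$ and using that the $p$'s are independent of the $y$'s and conversely, one computes $\nabla_{\frac{\partial}{\partial y^k}}\mathcal{E}=\frac{\partial}{\partial y^k}+y^jC^l_{kj}\frac{\partial}{\partial y^l}=\frac{\partial}{\partial y^k}$ since $C^l_{kj}y^j=0$, and likewise $\nabla_{\frac{\partial}{\partial p_k}}\mathcal{E}=\frac{\partial}{\partial p_k}$ since $C^{ij}_kp_j=0$; $C^\infty$-linearity in $X$ then gives $\nabla_X\mathcal{E}=X$. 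Equivalently, the same identity follows from the Lemma: expanding $\nabla_X\!\left(\mathcal{E}/\sqrt{F^2+K^2}\right)=PX/\sqrt{F^2+K^2}$ from \eqref{II16}, using $X(\sqrt{F^2+K^2})=\zeta(X)$ (which one checks from \eqref{II7} on $\frac{\partial}{\partial y^k}$ and $\frac{\partial}{\partial p_k}$) and the decomposition $X=PX+\frac{1}{\sqrt{F^2+K^2}}\zeta(X)\mathcal{E}$ of \eqref{II4}, again yields $\nabla_X\mathcal{E}=X$.

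With this identity in hand the conclusion is immediate. Since $\nabla$ is the Levi-Civita connection it is torsion free, so for all $X,Y\in\Gamma(TL_V)$
\[
R(X,Y)\mathcal{E}=\nabla_X\nabla_Y\mathcal{E}-\nabla_Y\nabla_X\mathcal{E}-\nabla_{[X,Y]}\mathcal{E}=\nabla_XY-\nabla_YX-[X,Y]=0.
\]
In particular $R(X,\mathcal{E})\mathcal{E}=0$, so the numerator of the sectional curvature displayed above vanishes for every plane containing $\mathcal{E}$, proving the theorem. The only genuine work is the identity $\nabla_X\mathcal{E}=X$ (geometrically, $\mathcal{E}$ is a concurrent vector field on each leaf $L_V$); once it is established, the vanishing of the sectional curvature is a purely formal consequence of torsion-freeness, and no further obstacle remains.
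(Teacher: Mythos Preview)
Your proof is correct. The identity $\nabla_X\mathcal{E}=X$ that you isolate is valid (both of your derivations are fine: the local one uses exactly the symmetry of the Christoffel symbols together with \eqref{II15}, and the alternative via \eqref{II16} and \eqref{II4} is also sound since $X(\sqrt{F^2+K^2})=\zeta(X)$ is immediate from \eqref{II7}). From that, torsion-freeness gives $R(X,Y)\mathcal{E}=0$ for all $X,Y$, which is actually stronger than what is needed.

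The paper takes a different, more computational route: instead of extracting $\nabla_X\mathcal{E}=X$, it combines \eqref{II16} with the formula \eqref{II18} for $\nabla_XP$ to compute directly
\[
R_{L_V}(X,\mathcal{E})\mathcal{E}=-\left(1-\frac{\mathcal{E}(\sqrt{F^2+K^2})}{\sqrt{F^2+K^2}}\right)PX,
\]
and then invokes the homogeneity relation $\mathcal{E}(\sqrt{F^2+K^2})=\sqrt{F^2+K^2}$ to make the coefficient vanish. Your argument is shorter and more conceptual: it singles out the concurrent-vector-field property of $\mathcal{E}$ and reduces the result to a one-line consequence of torsion-freeness, bypassing \eqref{II18} entirely. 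The paper's approach, on the other hand, makes the dependence on the homogeneity of $F$ and $K$ explicit in the final formula, which has some expository value but is not needed for the bare statement.
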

\begin{proof}
Denote by $R_{L_V}$ the curvature tensor field of $\nabla$ on $L_V$. Then, by using \eqref{II16} and \eqref{II18}, we obtain
\begin{equation}
\label{II25}
R_{L_V}\left(X,\mathcal{E}\right)\mathcal{E}=-\left(1-\frac{\mathcal{E}(\sqrt{F^2+K^2})}{\sqrt{F^2+K^2}}\right)PX
\end{equation}
for every vector field $X$ on $L_V$. Now, taking into account $\mathcal{E}(\sqrt{F^2+K^2})=\sqrt{F^2+K^2}$, the sectional curvature of a plane section $\{X,\mathcal{E}\}$ vanishes on $L_V$.
\end{proof}
\begin{corollary}
Let $(M,F)$ be a $n$-dimensional Finsler space. Then there exist no leaves of $V$ which are positively or negatively curved.
\end{corollary}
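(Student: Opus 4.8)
The plan is to derive the statement as an immediate consequence of Theorem \ref{t3}. The essential point is that on each leaf $L_V$ the Liouville vector field $\mathcal{E}$ provides, at every point off the zero section, a genuine tangent direction along which a nondegenerate plane section can be built; Theorem \ref{t3} then forces the sectional curvature of that plane to vanish, which is incompatible with the hypothesis of a curvature that is everywhere of one strict sign.

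First I would record that $G_{L_V}$ is positive definite, being the restriction of the Riemannian metric $G$ assembled in \eqref{I8} from the positive definite tensors $g_{ij}$ and $g^{*ij}$. In particular, by \eqref{II1} we have $G_{L_V}(\mathcal{E},\mathcal{E})=F^2+K^2>0$ at every point of $L_V$ lying off the zero section, so $\mathcal{E}$ is a nonnull tangent vector there. Since $\dim L_V=2n\geq 2$, I can choose a tangent vector $X$ on $L_V$ that is linearly independent of $\mathcal{E}$; because $G_{L_V}$ is definite, the plane $\mathrm{span}\{X,\mathcal{E}\}$ is automatically nondegenerate, so Theorem \ref{t3} applies and yields that its sectional curvature is zero.

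Then I would argue by contradiction. Suppose some leaf $L_V$ were positively curved, meaning that every sectional curvature of $\nabla$ on $L_V$ is strictly positive; the negatively curved case is treated identically with the inequality reversed. Choosing any point of $L_V$ and any $\mathcal{E}$-containing plane section as in the previous step, Theorem \ref{t3} gives that its sectional curvature equals $0$, contradicting the assumed strict positivity (respectively negativity) of all sectional curvatures. Hence no leaf of $V$ can be positively or negatively curved. The only step requiring any care is the existence of such $\mathcal{E}$-containing nondegenerate planes, and this is guaranteed precisely because $G_{L_V}$ is Riemannian and $\mathcal{E}$ is nonnull; beyond this observation there is no serious obstacle.
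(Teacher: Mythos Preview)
Your argument is correct and matches the paper's intent: the corollary is stated without proof there, being an immediate consequence of Theorem~\ref{t3}. Your added care in noting that $G_{L_V}$ is positive definite and $\mathcal{E}$ is nonnull (so that $\mathcal{E}$-containing nondegenerate planes exist) simply makes explicit what the paper leaves tacit.
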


Finally, let us study certain relations between the vertical Liouville foliations $V_{\mathcal{E}_1}$, $V_{\mathcal{E}_2}$ and $V_{\mathcal{E}}$, respectively.

We notice that we have the following decompositions of the vertical distribution:
\begin{equation}
\label{a3}
V=V_{\mathcal{E}_1}\oplus V_{\mathcal{E}_2}\oplus\{\mathcal{E}_1\}\oplus\{\mathcal{E}_2\}\,\,{\rm and}\,\,V=V_{\mathcal{E}}\oplus\{\mathcal{E}\}.
\end{equation}
Taking into account that $[\stackrel{1}{P^{i}_j}\frac{\partial}{\partial y^{i}},\stackrel{2}{P^k_l}\frac{\partial}{\partial p_l}]=0$ and $[\mathcal{E}_1,\mathcal{E}_2]=0$ we get that both distributions $V_{\mathcal{E}_1}\oplus V_{\mathcal{E}_2}$ and $\{\mathcal{E}_1\}\oplus\{\mathcal{E}_2\}$ are integrable. Evidently, $\{\mathcal{E}\}\subset\{\mathcal{E}_1\}\oplus\{\mathcal{E}_2\}$ and by \eqref{a1} we have also $V_{\mathcal{E}_1}\oplus V_{\mathcal{E}_2}\subset V_{\mathcal{E}}$. Thus, we have the following vertical subfoliations on $\mathcal{T}M$:
\begin{equation}
\label{a4}
\{\mathcal{E}\}\subset\{\mathcal{E}_1\}\oplus\{\mathcal{E}_2\}\subset V\,,\,V_{\mathcal{E}_1}\oplus V_{\mathcal{E}_2}\subset V_{\mathcal{E}}\subset V.
\end{equation} 
The relations \eqref{a3} says that $\{\mathcal{E}\}$ and $V_{\mathcal{E}_1}\oplus V_{\mathcal{E}_2}$ have the same orthogonal complement in $\{\mathcal{E}_1\}\oplus\{\mathcal{E}_2\}$ and in $V_{\mathcal{E}}$, respectively. It is a line distribution $\{\mathcal{E}^\prime\}$, where $\mathcal{E}^\prime=K^2\mathcal{E}_1-F^2\mathcal{E}_2$, see \eqref{a2} (or by direct calculations in $G(\alpha_1\mathcal{E}_1+\alpha_2\mathcal{E}_2,\mathcal{E})=0$ it results $\alpha_1=K^2$ and $\alpha_2=-F^2$). Thus
\begin{equation}
\label{a5}
\{\mathcal{E}_1\}\oplus\{\mathcal{E}_2\}=\{\mathcal{E}\}\oplus\{\mathcal{E}^\prime\}\,,\,V_{\mathcal{E}}=V_{\mathcal{E}_1}\oplus V_{\mathcal{E}_2}\oplus\{\mathcal{E}^\prime\}.
\end{equation}

\begin{proposition}
The leaves of the foliation $\{\mathcal{E}_1\}\oplus\{\mathcal{E}_2\}$ are totally geodesic submanifolds of the leaves of vertical foliation $V$.
\end{proposition}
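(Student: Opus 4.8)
The plan is to show that the distribution $\{\mathcal{E}_1\}\oplus\{\mathcal{E}_2\}$ is closed under the Levi-Civita connection $\nabla$ of $G_{L_V}$, i.e. that $\nabla_X Y$ is again a section of $\{\mathcal{E}_1\}\oplus\{\mathcal{E}_2\}$ whenever $X,Y$ are tangent to a leaf. Since the second fundamental form of an immersed submanifold is tensorial in both arguments, it suffices to verify this on the local frame $\{\mathcal{E}_1,\mathcal{E}_2\}$ that spans the distribution over $TM^0\oplus T^*M^0$; the vanishing of the normal part of $\nabla_{\mathcal{E}_a}\mathcal{E}_b$ for $a,b\in\{1,2\}$ then forces the second fundamental form to vanish identically, which is exactly the totally geodesic condition.

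First I would compute the four covariant derivatives using the local expression \eqref{II14} of $\nabla$ on $L_V$. For the two mixed terms, the connection coefficients linking the $y$- and $p$-blocks are zero by \eqref{II14}, and $y^i$, $p_i$ are independent coordinates, so $\nabla_{\mathcal{E}_1}\mathcal{E}_2=\nabla_{\mathcal{E}_2}\mathcal{E}_1=0$. For the pure terms, writing $\mathcal{E}_1=y^i\frac{\partial}{\partial y^i}$ and expanding by Leibniz gives $\nabla_{\mathcal{E}_1}\mathcal{E}_1=\mathcal{E}_1+y^iy^jC^k_{ij}(a,y)\frac{\partial}{\partial y^k}$, and analogously $\nabla_{\mathcal{E}_2}\mathcal{E}_2=\mathcal{E}_2+p_ip_jC^{ij}_k(a,p)\frac{\partial}{\partial p_k}$.

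The crucial step is then to eliminate the Christoffel-type corrections: by the homogeneity relations \eqref{II15}, namely $C^k_{ij}(a,y)y^j=0$ and $C^{ij}_k(a,p)p_j=0$, these extra terms vanish, leaving $\nabla_{\mathcal{E}_1}\mathcal{E}_1=\mathcal{E}_1$ and $\nabla_{\mathcal{E}_2}\mathcal{E}_2=\mathcal{E}_2$. Collecting the four identities, every $\nabla_{\mathcal{E}_a}\mathcal{E}_b$ lies in $\Gamma(\{\mathcal{E}_1\}\oplus\{\mathcal{E}_2\})$, so the second fundamental form vanishes and the leaves are totally geodesic in $L_V$.

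I do not anticipate a genuine obstacle here: the computation is short, and the only thing that makes it go through is the Euler-homogeneity of the Finsler and Cartan metrics encoded in \eqref{II15}, which is precisely what annihilates the curvature-like corrections. The one point requiring care is that $\{\mathcal{E}_1,\mathcal{E}_2\}$ is a frame only off the zero sections, so the statement is understood on $TM^0\oplus T^*M^0$; on that domain the tensoriality argument reduces everything to the frame computation above.
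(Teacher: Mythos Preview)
Your proposal is correct and follows essentially the same approach as the paper: the paper's proof simply states the four identities $\nabla_{\mathcal{E}_1}\mathcal{E}_1=\mathcal{E}_1$, $\nabla_{\mathcal{E}_1}\mathcal{E}_2=\nabla_{\mathcal{E}_2}\mathcal{E}_1=0$, $\nabla_{\mathcal{E}_2}\mathcal{E}_2=\mathcal{E}_2$ and concludes, whereas you additionally spell out how these identities follow from \eqref{II14} and the homogeneity relations \eqref{II15}.
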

\begin{proof}
Follows easily taking into account that $\nabla_{\mathcal{E}_1}\mathcal{E}_1=\mathcal{E}_1\,,\,\nabla_{\mathcal{E}_1}\mathcal{E}_2=\nabla_{\mathcal{E}_2}\mathcal{E}_1=0\,,\,\nabla_{\mathcal{E}_2}\mathcal{E}_2=\mathcal{E}_2$.
\end{proof}

Also by direct calculus we obtain $\nabla_{\mathcal{E}^\prime}\mathcal{E}^\prime=-K^2F^2\mathcal{E}+(K^2-F^2)\mathcal{E}^\prime\notin\Gamma(\{\mathcal{E}^\prime\})$, which leads to
\begin{proposition}
If $\gamma$ is an integral curve of $\mathcal{E}^\prime$ then it is not a geodesic of a leaf of vertical foliation $V$.
\end{proposition}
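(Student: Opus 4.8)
The plan is to reduce the statement to a single covariant-derivative computation. An integral curve $\gamma$ of $\mathcal{E}^\prime$ has velocity $\dot\gamma=\mathcal{E}^\prime$ along $\gamma$, so $\gamma$ is a (possibly reparametrized) geodesic of the leaf $L_V$ with respect to $\nabla$ precisely when its acceleration $\nabla_{\dot\gamma}\dot\gamma=\nabla_{\mathcal{E}^\prime}\mathcal{E}^\prime$ stays tangent to the curve, i.e. when $\nabla_{\mathcal{E}^\prime}\mathcal{E}^\prime\in\Gamma(\{\mathcal{E}^\prime\})$. Thus it suffices to evaluate $\nabla_{\mathcal{E}^\prime}\mathcal{E}^\prime$ and exhibit a component transversal to $\mathcal{E}^\prime$.

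To compute it, I would expand $\mathcal{E}^\prime=K^2\mathcal{E}_1-F^2\mathcal{E}_2$ and use bilinearity together with the Leibniz rule $\nabla_X(fY)=X(f)Y+f\nabla_XY$. The Christoffel-type data $\nabla_{\mathcal{E}_1}\mathcal{E}_1=\mathcal{E}_1$, $\nabla_{\mathcal{E}_1}\mathcal{E}_2=\nabla_{\mathcal{E}_2}\mathcal{E}_1=0$, $\nabla_{\mathcal{E}_2}\mathcal{E}_2=\mathcal{E}_2$ established in the preceding Proposition handle the derivative terms, while the scalar derivatives are fixed by homogeneity: since $F^2=F^2(x,y)$ and $K^2=K^2(x,p)$ are positively homogeneous of degree two in $y$ and $p$ respectively, Euler's theorem gives $\mathcal{E}_1(F^2)=2F^2$, $\mathcal{E}_2(K^2)=2K^2$ and $\mathcal{E}_1(K^2)=\mathcal{E}_2(F^2)=0$. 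Collecting the terms along $\mathcal{E}_1$ and $\mathcal{E}_2$ and re-expressing in the frame $\{\mathcal{E},\mathcal{E}^\prime\}$ yields exactly the formula already recorded above, namely $\nabla_{\mathcal{E}^\prime}\mathcal{E}^\prime=-K^2F^2\mathcal{E}+(K^2-F^2)\mathcal{E}^\prime$.

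The conclusion is then immediate: on $TM^0\oplus T^*M^0$ both $F>0$ and $K>0$, so $\mathcal{E}_1\in\Gamma(V_1)$ and $\mathcal{E}_2\in\Gamma(V_2)$ are linearly independent, and hence $\mathcal{E}=\mathcal{E}_1+\mathcal{E}_2$ and $\mathcal{E}^\prime=K^2\mathcal{E}_1-F^2\mathcal{E}_2$ are linearly independent as well. The coefficient $-K^2F^2$ of $\mathcal{E}$ is strictly negative, so $\nabla_{\mathcal{E}^\prime}\mathcal{E}^\prime$ has a nonzero component along $\mathcal{E}$, transversal to $\{\mathcal{E}^\prime\}$; therefore $\nabla_{\mathcal{E}^\prime}\mathcal{E}^\prime\notin\Gamma(\{\mathcal{E}^\prime\})$ and $\gamma$ cannot be a geodesic of $L_V$.

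There is no serious obstacle here, since the argument is a short direct computation, but the step requiring the most care is the bookkeeping of the scalar derivatives $\mathcal{E}^\prime(K^2)$ and $\mathcal{E}^\prime(F^2)$ through the Leibniz rule, where the homogeneity relations must be applied correctly; equally, one should phrase the geodesic criterion in terms of the acceleration being proportional to the velocity rather than vanishing, so that the obstruction is located precisely in the transversal $\mathcal{E}$-component and is independent of any reparametrization of $\gamma$.
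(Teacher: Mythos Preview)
Your proposal is correct and matches the paper's approach exactly: the paper simply states ``by direct calculus we obtain $\nabla_{\mathcal{E}^\prime}\mathcal{E}^\prime=-K^2F^2\mathcal{E}+(K^2-F^2)\mathcal{E}^\prime\notin\Gamma(\{\mathcal{E}^\prime\})$, which leads to'' the proposition, and you have supplied the details of that calculus using the same ingredients (the $\nabla_{\mathcal{E}_i}\mathcal{E}_j$ formulas and homogeneity). Your explicit remark that the obstruction lies in the $\mathcal{E}$-component, hence persists under reparametrization, is a welcome clarification of the geodesic criterion the paper leaves implicit.
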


A natural question is if between the foliations $V_{\mathcal{E}_1}\oplus V_{\mathcal{E}_2}$ and $V_{\mathcal{E}}$ exists certain relations. Although, the leaves of $V_{\mathcal{E}_1}$ are totally umbilical submanifolds of the leaves of $V_1$, the leaves of $V_{\mathcal{E}_2}$ are totally umbilical submanifolds of the leaves of $V_2$ and the leaves of $V_{\mathcal{E}}$ are totally umbilical submanifolds of the leaves of $V$, we have
\begin{theorem}
The leaves of $V_{\mathcal{E}_1}\oplus V_{\mathcal{E}_2}$ are not totally umbilical submanifolds of the leaves of $V_{\mathcal{E}}$.
\end{theorem}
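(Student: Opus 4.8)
The plan is to compute the second fundamental form $B^{\prime}$ of a leaf $L_{V_{\mathcal{E}_1}\oplus V_{\mathcal{E}_2}}$ inside a leaf $L_{V_{\mathcal{E}}}$ and to show that its umbilicity factor is forced to take two different values along the two subbundles $V_{\mathcal{E}_1}$ and $V_{\mathcal{E}_2}$. Work on a fixed leaf $L_V$ of $V$ with the Levi-Civita connection $\nabla$ of \eqref{II14}. By the second relation in \eqref{a5}, the orthogonal complement of $V_{\mathcal{E}_1}\oplus V_{\mathcal{E}_2}$ inside $V_{\mathcal{E}}$ is the line distribution $\{\mathcal{E}^{\prime}\}$ with $\mathcal{E}^{\prime}=K^2\mathcal{E}_1-F^2\mathcal{E}_2$, so the normal bundle of $L_{V_{\mathcal{E}_1}\oplus V_{\mathcal{E}_2}}$ in $L_{V_{\mathcal{E}}}$ is one-dimensional, spanned by $\mathcal{E}^{\prime}$. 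Since $\mathcal{E}\perp\mathcal{E}^{\prime}$, passing from the ambient connection $\nabla$ on $L_V$ to the induced connection on $L_{V_{\mathcal{E}}}$ (which only subtracts the $\mathcal{E}$-component, by the Gauss formula and Theorem \ref{t2}) does not affect the $\mathcal{E}^{\prime}$-component; hence for $X,Y\in\Gamma(V_{\mathcal{E}_1}\oplus V_{\mathcal{E}_2})$ one may write $B^{\prime}(X,Y)=b(X,Y)\mathcal{E}^{\prime}$ with $b(X,Y)=G_{L_V}(\nabla_XY,\mathcal{E}^{\prime})/G_{L_V}(\mathcal{E}^{\prime},\mathcal{E}^{\prime})$, and being totally umbilical would mean $b=h\,G_{L_V}$ for a single scalar function $h$.

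First I would record $G_{L_V}(\mathcal{E}^{\prime},\mathcal{E}^{\prime})=K^2F^2(F^2+K^2)$, which is immediate from $G(\mathcal{E}_1,\mathcal{E}_1)=F^2$, $G(\mathcal{E}_2,\mathcal{E}_2)=K^2$ and $G(\mathcal{E}_1,\mathcal{E}_2)=0$. Next I would evaluate $b$ separately on each factor. Because $\nabla$ preserves the splitting $V=V_1\oplus V_2$ (the cross terms in \eqref{II14} vanish) and $G$ is block diagonal, for $X_1,Y_1\in\Gamma(V_{\mathcal{E}_1})$ the field $\nabla_{X_1}Y_1$ lies in $V_1$, whence $G(\nabla_{X_1}Y_1,\mathcal{E}^{\prime})=K^2\,G(\nabla_{X_1}Y_1,\mathcal{E}_1)$. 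The $V_1$-counterpart of \eqref{II16}--\eqref{II20} (equivalently, differentiating $G(Y_1,\mathcal{E}_1)=0$ and using $\nabla_{X_1}\mathcal{E}_1=X_1$, which holds since $X_1(F)=0$ on $V_{\mathcal{E}_1}$) gives $G(\nabla_{X_1}Y_1,\mathcal{E}_1)=-G(X_1,Y_1)$. Hence $b(X_1,Y_1)=-G(X_1,Y_1)/(F^2(F^2+K^2))$, i.e. the umbilicity factor on $V_{\mathcal{E}_1}$ is $\mu_1=-1/(F^2(F^2+K^2))$. Running the mirror computation on $V_{\mathcal{E}_2}$, where $\nabla_{X_2}Y_2\in V_2$ and $G(\nabla_{X_2}Y_2,\mathcal{E}_2)=-G(X_2,Y_2)$, yields $\mu_2=1/(K^2(F^2+K^2))$.

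The conclusion then follows by comparing the two factors. Taking $X=Y=X_1$ forces the mean curvature vector to be $\mu_1\mathcal{E}^{\prime}$, while taking $X=Y=X_2$ forces it to be $\mu_2\mathcal{E}^{\prime}$; since $\mu_1<0<\mu_2$ (and $F^2,K^2>0$ off the zero sections, with $X_1,X_2$ non-null as $G$ is positive definite), no single scalar $h$ can satisfy $b=h\,G_{L_V}$, so $B^{\prime}$ is not proportional to the induced metric and $L_{V_{\mathcal{E}_1}\oplus V_{\mathcal{E}_2}}$ fails to be totally umbilical in $L_{V_{\mathcal{E}}}$. (This uses $n\geq 2$, so that $V_{\mathcal{E}_1}$ and $V_{\mathcal{E}_2}$ both have positive rank $n-1$ and admit nonzero sections.) I expect the main obstacle to be the bookkeeping of the first paragraph: one must justify carefully that the $\mathcal{E}^{\prime}$-component computed with the ambient $\nabla$ really equals the normal part of the \emph{induced} second fundamental form on $L_{V_{\mathcal{E}}}$ (this is exactly where the orthogonality $\mathcal{E}\perp\mathcal{E}^{\prime}$ is essential), and one must establish the auxiliary identity $G(\nabla_{X_1}Y_1,\mathcal{E}_1)=-G(X_1,Y_1)$ --- the $V_1$-version of \eqref{II20} --- rather than invoking \eqref{II20} itself, since here the relevant normal direction is $\mathcal{E}_1$, not $\mathcal{E}$.
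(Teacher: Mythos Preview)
Your proof is correct and follows essentially the same route as the paper: both identify the normal direction of $L_{V_{\mathcal{E}_1}\oplus V_{\mathcal{E}_2}}$ in $L_{V_{\mathcal{E}}}$ as $\{\mathcal{E}'\}$, compute the second fundamental form using the ambient Levi--Civita connection $\nabla$ and the parallelism of $G_{L_V}$ (the paper via $\nabla_{X'}\mathcal{E}'=K^2P_1X_1-F^2P_2X_2$, you via the equivalent factor-by-factor identities $G(\nabla_{X_a}Y_a,\mathcal{E}_a)=-G(X_a,Y_a)$), and observe that the result restricts to $-K^2G$ on $V_{\mathcal{E}_1}$ and $+F^2G$ on $V_{\mathcal{E}_2}$, so it cannot be a scalar multiple of $G_{L_V}$. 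Your version is in fact more explicit than the paper on two points it leaves tacit: the orthogonality $\mathcal{E}\perp\mathcal{E}'$ that justifies computing with $\nabla$ rather than the induced connection on $L_{V_{\mathcal{E}}}$, and the sign comparison $\mu_1<0<\mu_2$ (together with $n\ge2$) that actually rules out a common umbilicity factor.
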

\begin{proof}
Taking into account that $\frac{\mathcal{E}^\prime}{FK\sqrt{F^2+K^2}}$ is the unit normal vector field of $L_{V_{\mathcal{E}_1}\oplus V_{\mathcal{E}_2}}$, the second fundamental form $B^\prime$ of $L_{V_{\mathcal{E}_1}\oplus V_{\mathcal{E}_2}}$ as hypersurface of $L_{V_{\mathcal{E}}}$ is given by
\begin{equation}
\label{a7}
B^\prime(X^\prime,Y^\prime)=\frac{1}{FK\sqrt{F^2+K^2}}G_{L_V}\left(\nabla_{X^\prime}Y^\prime,\mathcal{E}^\prime\right)\,,\,\forall\,X^\prime,Y^\prime\in\Gamma\left(TL_{V_{\mathcal{E}_1}\oplus V_{\mathcal{E}_2}}\right).
\end{equation}
Taking into account that $G_{L_V}$ is parallel with respect to $\nabla$, we deduce that
\begin{equation}
\label{a8}
G_{L_V}\left(\nabla_{X^\prime}Y^\prime,\mathcal{E}^\prime\right)=-G_{L_V}(Y^\prime,\nabla_{X^\prime}\mathcal{E}^\prime)\,,\,\forall\,X^\prime,Y^\prime\in\Gamma\left(TL_{V_{\mathcal{E}_1}\oplus V_{\mathcal{E}_2}}\right).
\end{equation}
Now, let us take $X^{\prime}=P_1(X_1)+P_2(X_2)$ and $Y^\prime=P_1(Y_1)+P_2(Y_2)$ for every $X_1,Y_1\in\Gamma(V_1)$ and $X_2,Y_2\in\Gamma(V_2)$. Then by direct calculus we get 
\begin{equation}
\label{a9}
\nabla_{X^\prime}\mathcal{E}^\prime=K^2P_1(X_1)-F^2P_2(X_2).
\end{equation}
Thus the relation \eqref{a7} becomes
\begin{equation}
\label{a10}
B^\prime(X^\prime,Y^\prime)=\frac{-1}{FK\sqrt{F^2+K^2}}G_{L_V}\left(K^2P_1(X_1)-F^2P_2(X_2),Y^\prime\right)\neq \lambda G_{L_V}(X^\prime,Y^\prime),
\end{equation}
that is, $L_{V_{\mathcal{E}_1}\oplus V_{\mathcal{E}_2}}$ is not totally umbilical immersed in $L_{V_{\mathcal{E}}}$.
\end{proof}

\noindent
Cristian Ida\\
Department of Mathematics and Computer Science\\
University Transilvania of Bra\c{s}ov\\
Address: Bra\c{s}ov 500091, Str. Iuliu Maniu 50, Rom\^{a}nia\\
email: \textit{cristian.ida@unitbv.ro}\\

\noindent 
Paul Popescu\\
Department of Applied Mathematics\\
 University of Craiova\\
Address: Craiova, 200585,  Str. Al. Cuza, No. 13,  Rom\^{a}nia\\
 email:\textit{paul$_{-}$p$_{-}$popescu@yahoo.com}

\smallskip

\end{document}